\documentclass{cimart}




\title [A note on division rings satisfying generalized rational identities with anti-automorphisms]{A note on division rings satisfying generalized\\ rational identities with anti-automorphisms}

\author{
    Vo Hoang Minh Thu, Vu Mai Trang
    }

\authorinfo[
    Vo Hoang Minh Thu]{
    Faculty of Mathematics and Computer Science, University of Science, Ho Chi Minh City, Vietnam; Vietnam National University, Ho Chi Minh City, Vietnam}{%
    minhthuvohoang@gmail.com
    }

\authorinfo[
    Vu Mai Trang]{
    Binh Duong University, Ho Chi Minh City, Vietnam}{%
    vmtrang@bdu.edu.vn
    }

\abstract{%
    	Let $D$ be a division ring with infinite center $F$, $\sigma$ be an  anti-automorphism of $D$ and $m$ be a positive integer such that  $\sigma^m\neq \mathrm{Id}$. In this paper, we show that if $D$ satisfies a $\sigma^m$-GRI, then $D$ is centrally finite. 
    }

\keywords{
    anti-automorphism,  division ring, rational identity
    }

\msc{
    16K20 (primary); 16R50 (secondary).
    }

\VOLUME{34}
\YEAR{2026}
\ISSUE{1}
\NUMBER{3}
\DOI{https://doi.org/10.46298/cm.15434}

\begin{document}

	\section{Introduction and the main result} 

Let $D$ be a division ring with center $F$. We say that $D$ is \textit{centrally finite} if $D$ is regarded as a finite-dimensional vector space over $F$. This topic relates to conditions under which a division ring is centrally finite. We focus on division rings satisfying certain identities.

A classical result states that if $D$ satisfies a polynomial identity, then $D$ is centrally finite (see \cite{i}). To present this result and the extended results we aim to prove, we recall some concepts. Let $X=\{x_1,x_2,\dots, x_n\}$ be a set of $n$ non-commutative indeterminates and $K$ be a field. We denote $K\langle X\rangle$ as the polynomial ring in indeterminates $X$ over $K$, and $R$ as a ring whose center contains $K$. A polynomial $f(X)\in K\langle X\rangle \backslash \{0\}$ is called a \textit{polynomial identity} of $R$ if for every $n$-tuple $(a_1,a_2,\dots, a_n)\in R^n$, we have $f(a_1,a_2,\dots,a_n)=0$. In this case, we say $R$ satisfies the \textit{polynomial identity} $f$, abbreviated as PI. If $R$ satisfies some polynomial identity, then we say $R$ satisfies a polynomial identity. Since we only work with division rings, we directly assume $R=D$ is a division ring and $K$ coincides with the center $F$ of $D$.

The most classical result concerning division rings satisfying polynomial identities relates to its finite-dimensionality. Specifically, it is proven in \cite{i} that if $D$ is a division ring with center $F$ and satisfies some polynomial identity over $F$, then $D$ is centrally finite. This result has been subsequently extended in many different directions (e.g., see \cite{s, s1, k, ber1, ber2, chiba, khar, w, w2, j}). 

We now recall the basic notation used throughout. We denote $D\langle X\rangle=D*_F F\langle X\rangle$ as the free product of $D$ with $F\langle X\rangle$ over $F$. Each element in $D\langle X\rangle\backslash \{0\}$ is called a \textit{generalized polynomial} over $D$. Furthermore, since the ring of generalized polynomials $D\langle X\rangle$ is fir, the maximal right ring of quotients of $D\langle X\rangle$ is a division ring (e.g., see \cite{p}). This division ring is denoted as $D(X)$. Each nonzero element in $D(X)$ is called a \textit{generalized rational polynomial} over $D$. Suppose $f(X)$ is a generalized rational polynomial (respectively, a generalized polynomial) over $D$. If for every $n$-tuple $(a_1,a_2,\dots,a_n)\in D^n$ such that when $f(a_1,a_2,\dots,a_n)$ is defined, one has $f(a_1,a_2,\dots,a_n)=0$, then we say $D$ satisfies the \textit{generalized rational identity}, abbreviated as GRI (respectively, the \textit{generalized polynomial identity}, abbreviated GPI) $f$, or $f$ is a \textit{generalized rational  identity} (respectively, a \textit{generalized polynomial identity}) of $D$.

In 1966, Amitsur proved that if the center $F$ is infinite and the division ring $D$ satisfies a generalized rational identity, then $D$ is centrally finite (see \cite{s1}). There are several extensions of this result. For example, in 1996, Chiba proved that if the center $F$ is infinite and the multiplicative group $D^*=D\backslash \{0\}$ contains a non-central proper subnormal subgroup satisfying a generalized rational identity, then $D$ is centrally finite (see \cite{chiba_96}). Another direction is to consider identities corresponding to an anti-automorphism. Recall that a bijection $\sigma : D\to D, x\mapsto x^\sigma,$ is called an \textit{anti-automorphism} of $D$ if $(a+b)^\sigma=a^\sigma+b^\sigma$ and $(ab)^\sigma=b^\sigma a^\sigma$ for all $a,b\in D$. In the case where the order of $\sigma$ is $2$, $\sigma$ is called an \textit{involution} of $D$. Let $m$ be a positive integer such that $\sigma^i\neq \mathrm{Id}$ for all $1\leq i\leq m$. For each $1\leq i\leq m$, denote $X^{\sigma^i}=\{x_1^{\sigma^i},x_2^{\sigma^i},\dots\}$ as the set of indeterminates indexed corresponding to each $\sigma^i$ and $X_m=\cup_{i=0}^mX^{\sigma^i}$. We denote $D(X_m)$ as the division ring as mentioned above. Let $f(X_m)$ be a nonzero element in $D(X_m)$. If for every $n$-tuple $(r_1,r_2,\dots,r_n)\in D^n$ such that when $$f(r_1,r_2,\dots,r_n,r_1^\sigma,r_2^\sigma,\dots,r_n^\sigma,\dots,r_1^{\sigma^m},r_2^{\sigma^m},\dots,r_n^{\sigma^m})$$ is defined, we have $$f(r_1,r_2,\dots,r_n,r_1^\sigma,r_2^\sigma,\dots,r_n^\sigma,\dots,r_1^{\sigma^m},r_2^{\sigma^m},\dots,r_n^{\sigma^m})=0,$$ then we say $f(X_m)$ is a \textit{generalized rational identity} of $D$ \textit{with respect to} $\sigma^m$. In this case, we simply say that $D$ satisfies a $\sigma^m$-GRI $f$ or $f$ is a $\sigma^m$-GRI of $D$. The ``best'' result is related to when $\sigma$ is an involution. In this case, $m=1$ and $\sigma$ will be denoted as $\star$. In more detail, it is proved that if the center $F$ is infinite and $D$ satisfies a $\star$-GRI, then $D$ is centrally finite (see \cite{Pa_Des}). The case of generalized polynomial identities with respect to $\sigma$ receives even more attention (e.g., see the book \cite{k} for a survey and the paper \cite{swain} for some additional interesting results on this topic). The objective of this paper is to prove the following result.

\begin{theorem}\label{main}
	Let $D$ be a division ring with  infinite center $F$. If $D$ satisfies a $\sigma^m$-GRI then $D$ is centrally finite. 
\end{theorem}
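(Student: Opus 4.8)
The plan is to prove the contrapositive: if $D$ has infinite center $F$ and is not centrally finite, then $D$ satisfies no nonzero $\sigma^m$-GRI. Assume for contradiction that such a $D$ satisfies some $f(X_m)\in D(X_m)\setminus\{0\}$ as a $\sigma^m$-GRI, and choose $f$ of minimal complexity --- first the number $n$ of underlying variables, then the height of a presentation of $f$ over $D\langle X_m\rangle$ --- so that $f$ is \emph{reduced}: no family $\{x_i,x_i^\sigma,\dots,x_i^{\sigma^m}\}$ is superfluous, and no indeterminate can be specialized to a constant of $D$ without yielding $0$ or a shorter $\sigma^m$-GRI. We will use as black boxes the three facts recalled in the introduction: Amitsur's theorem that a centrally infinite division ring with infinite center satisfies no nonzero ordinary GRI \cite{s1}; Amitsur's generalized polynomial identity theorem, so that such a $D$ satisfies no nonzero GPI; and the involution case \cite{Pa_Des}, so that such a $D$ satisfies no nonzero $\star$-GRI for any involution $\star$ of $D$. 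The hypothesis $\sigma^m\neq\mathrm{Id}$ guarantees that $f$ still genuinely involves $\sigma$ after the reduction.

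The argument then turns on a dichotomy for $\sigma$. Call $\sigma$ \emph{inner-like} if $D$ admits an involution $\star$ and an element $a\in D^*$ with $\sigma=I_a\circ\star$, where $I_a$ denotes conjugation by $a$; otherwise call $\sigma$ \emph{$X$-outer}. In the inner-like case, the relation $x^\sigma=a\,x^\star a^{-1}$ lets one rewrite every higher indeterminate $x_i^{\sigma^j}$ ($2\le j\le m$) in terms of $x_i$, $x_i^\star$ and $a$: indeed $\sigma^2=I_{a(a^\star)^{-1}}$ is inner, so $\sigma^j$ alternates between conjugates of $\star$ and inner automorphisms, all of them expressible through $x_i$ and $x_i^\star$. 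Substituting these expressions into $f$ produces a nonzero $\star$-GRI of $D$, contradicting \cite{Pa_Des}. (In the special case $\sigma=\star$, i.e.\ $\sigma$ an involution, there is nothing to rewrite and one appeals to \cite{Pa_Des} directly.)

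It remains to treat the $X$-outer case, where the goal is to contradict the assumption that the reduced $f$ vanishes at every special tuple $(r_1,\dots,r_n,r_1^\sigma,\dots,r_n^{\sigma^m})$. The strategy is the rational, anti-automorphism analogue of Kharchenko's theorem on skew identities \cite{khar} combined with Chiba's density method \cite{chiba}: one enlarges $D$ to a division ring $\widetilde D$ --- built from a skew Laurent series construction (possibly iterated) over a transcendental extension of the center, on which $\sigma$ extends compatibly --- and exhibits elements of $\widetilde D$ whose $\sigma$-orbits are ``rationally independent'' over $D$, forcing a reduced nonzero element of $D(X_m)$ to take a nonzero value at the corresponding specialization. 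Infiniteness of $F$ is used precisely here, to run the Zariski-type density that promotes the existence of one good point in $\widetilde D$ to one in $D$ itself. The outcome is that no reduced nonzero $f$ can be a $\sigma^m$-GRI of a centrally infinite $D$, completing the contradiction; any degenerate reduction that would arise along the way is excluded using Amitsur's GPI theorem.

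The main obstacle is exactly this last step: constructing the extension $\widetilde D$ together with a compatible extension of the anti-automorphism $\sigma$, and proving the required independence of the $\sigma$-orbits, so that a reduced nonzero $f$ provably fails to be a $\sigma^m$-identity. For automorphisms this is available essentially off the shelf from Chiba's work, but for anti-automorphisms --- and especially when $\sigma^2$ happens to be inner, so that $\sigma$ has ``finite order modulo inner automorphisms'' --- the construction and the bookkeeping of which variable families survive the reduction are delicate; a secondary point is to confirm that in every residual situation one is genuinely either $X$-outer (hence covered by the genericity argument) or inner-like (hence covered by \cite{Pa_Des}), with no case falling between.
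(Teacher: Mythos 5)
Your proposal is a programme rather than a proof, and the decisive step is exactly the one you leave open. In your ``$X$-outer'' branch everything rests on an anti-automorphism, rational-function analogue of Kharchenko's theorem: you must build an extension $\widetilde D$ to which $\sigma$ extends compatibly, prove that suitable $\sigma$-orbits are ``rationally independent'' so that a reduced nonzero element of $D(X_m)$ takes a nonzero value, and then transfer that nonvanishing back to $D$ by density. None of this is established in your text, and you yourself flag it as the main obstacle; for anti-automorphisms (in particular when $\sigma^2$ is inner, so $\sigma$ has finite order modulo inner automorphisms) no such result is available off the shelf, so the contradiction you need in the $X$-outer case is never actually derived. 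The ``inner-like'' branch also has an unverified point: after rewriting each $x_i^{\sigma^j}$ through $x_i$, $x_i^\star$ and constants you must check that the resulting element of the division ring of $\star$-rational functions is still nonzero (substitution inside $D(X_m)$ can degenerate), and ``reducedness'' as you define it does not by itself give this. Finally, the dichotomy itself carries a hidden burden: ``inner-like'' requires $D$ to admit an involution $\star$ with $\sigma=I_a\circ\star$, and you give no argument that the complementary case is exactly the one your (unproved) genericity machinery covers.

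For contrast, the paper's proof needs none of this machinery. Given a $\sigma^m$-GRI $f$ defined at some point, one substitutes $r_j+x_jt$ (with $t$ a central indeterminate, $t^\sigma=t$) and expands in a Laurent/power series in $t$; since $F$ is infinite, evaluating at infinitely many central values $p$ kills the numerator polynomial in $t$, so every homogeneous coefficient $f_i$ is a $\sigma^m$-GPI of $D$, and some $f_{i_0}\neq 0$. Then a purely polynomial argument (linearization via the operators $\Delta_j$, followed by freezing all but one variable and a one-variable lemma proved with elementary GPI tools from Amitsur and Beidar--Martindale) shows that a $\sigma^m$-GPI forces an ordinary GPI, whence central finiteness by Amitsur's theorem. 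If you want to salvage your approach, you would have to actually prove the Chiba--Kharchenko-type independence theorem for anti-automorphisms in the rational setting, which is a substantially harder task than the theorem at hand; the Laurent-series reduction from GRI to GPI is the step your plan is missing and is what makes the elementary route work.
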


This theorem can be viewed as an anti-automorphism counterpart of Amitsur's Theorem and a natural extension of \cite{Pa_Des}. 

\section{Preliminaries} 

In short and simple terms, we prove the main result of this paper by induction on $n$, the number of indeterminates in the identity. In this section, we prove the case for $n=1$. To do it, we need to review some basic concepts and establish notation. Many results in this section may have proofs similar to those found in some reference documents, but none of the results are directly quoted from them. Therefore, for the convenience of the readers, we will provide proofs for the results if necessary.  The main result we obtained in this section is Lemma ~\ref{2.4}, which states that if a division ring satisfies a blended linear identity in one variable, then this division ring is centrally finite.

We emphasize some notations that will be used throughout the paper as follows. Let $D$ be a division ring with center $F$, and $\sigma: D\to D, x\mapsto x^\sigma,$ be an anti-automorphism over $F$, that is, $(a+b)^\sigma=a^\sigma+b^\sigma, (ab)^\sigma=b^\sigma a^\sigma$ and $\alpha^\sigma=\alpha$, for every $a,b\in D$ and $\alpha\in F$. Consider $X=\{x_1,x_2,\dots\}$ as a countable set of non-commutative indeterminates. Suppose $m$ is an integer such that $\sigma^i\neq Id$ for all $1\leq i\leq m$. For each $1\leq i\leq m$, let $X^{\sigma^i}=\{x_1^{\sigma^i},x_2^{\sigma^i},\dots\}$ and $X_m=\bigcup_{i=0}^mX^{\sigma^i}$. The notation $F\langle X_m\rangle$ denotes the free algebra generated by $X_m$ over $F$, and $D\langle X_m\rangle=D*_FF\langle X_m\rangle$ is the free product of $D$ and $F\langle X_m\rangle$ over $F$. Each element of $D\langle X_m\rangle$ has the form 
\[
f=f(x_1,x_2,\dots,x_n,x_1^\sigma,x_2^\sigma,\dots,x_n^\sigma,\dots,x_1^{\sigma^m},x_2^{\sigma^m},\dots,x_n^{\sigma^m})=\sum_{i=1}^lP_i,
\]
where $P_i=\delta_ia_{i_0}x_{i_1}^{\sigma^{k_1}}a_{i_1}x_{i_2}^{\sigma^{k_2}}a_{i_2}\cdots x_{i_q}^{\sigma^{k_q}}a_{i_q}$ are monomials, $\delta_i$ is in $F$, $\{a_{i_0},a_{i_1}, \dots, a_{i_q}\}$ is a subset of $ D$, and $\{x_{i_1}^{\sigma^{k_1}},x_{i_2}^{\sigma^{k_2}},\dots,x_{i_q}^{\sigma^{k_q}}\}$ is a subset $X_m$.

For each monomial $P\in D\langle X_m\rangle$, we define the \textit{$\sigma^m$-degree of $P$ with respect to the indeterminate $x_j$}, denoted $\sigma^m$-$\deg_{x_j}(P)$, as the number of occurrences of $x_j^{\sigma^i}$ in $P$ for some $0\leq i\leq m$. Then, the degree of $P$, denoted by $\deg(P)$, is defined as
\[
\deg(P)=\sum_{x_j\in X}\sigma^m\text{-}\deg_{x_j}(P).
\]
We also define the \textit{$\sigma^m$-height of $P$ with respect to the indeterminate $x_j$}, denoted $\sigma^m\text-ht_{x_j}(P)$, as the maximum value between $(\sigma^m\text{-}\deg_{x_j}(P)-1)$ and $0$. Similarly, the height of $P$, denoted by $ht(P)$, is defined as
\[
ht(P)=\sum_{x_j\in X}\sigma^m\text{-}ht_{x_j}(P).
\]
In other words, the height of $P$ is equal to the degree of $P$ minus the number of distinct indeterminates appearing in $P$. Let $\AA$ be a $F$-basis of $D$ containing $1$.  Then $\AA$, combined with the standard $F$-basis of $F\langle X_m\rangle$, induces a monomial basis $M(\AA)$ for $D\langle X_m\rangle$. Suppose $f=\sum_{i=1}^\ell P_i\in D\langle X_m\rangle$ where $P_1, P_2,\dots,P_\ell$ are monomials in $M(\AA)$. We also define
\begin{enumerate}
	\item $\sigma^m\text{-}\deg_{x_j}(f)=\max\{\sigma^m\text{-}\deg_{x_j}(P_i)\mid 1\leq i\leq l\}$;
	\item $\deg(f)=\max\{\deg(P_i)\mid 1\leq i\leq l\}$;
	\item $\sigma^m\text{-}ht_{x_j}(f)=\max\{\sigma^m\text{-}ht_{x_j}(P_i)\mid 1\leq i\leq l\}$;
	\item $ht(f)=\max\{ht(P_i)\mid 1\leq i\leq l\}$.
\end{enumerate}

Assume that $x_1,x_2,\dots,x_n$ are indeterminates appearing in the polynomial $f$. Suppose indeterminate $x_j$ appears in all monomials of $f$, then we say $f$ is \textit{blended} with respect to the indeterminate $x_j$. If $f$ is blended with respect to every indeterminate $x_j$ where $1\leq j\leq n$, then we say $f$ is \textit{blended}.

Suppose 
\[
f=f(x_1,x_2,\dots,x_n,x_1^\sigma,x_2^\sigma,\dots,x_n^\sigma,\dots,x_1^{\sigma^m},x_2^{\sigma^m},\dots,x_n^{\sigma^m})
\]
is a $\sigma^m$-GPI of $D$ with $\deg(f)=p$. Let $q$ be the number of indeterminates in $f$ that are not blended. Suppose $q>0$. Without loss of generality, we assume $f$ is not blended with respect to indeterminates $x_1,x_2,\cdots,x_q$. Set
\begin{align*}
f_1 &= f(0,x_2,\dots,x_n,0,x_2^\sigma,\dots,x_n^\sigma,\dots,0,x_2^{\sigma^m},\dots,x_n^{\sigma^m}); \\
f_2 &= f-f_1.
\end{align*}
Since $f$ is a $\sigma^m$-GPI of $D$, one has $f_1$ and $f_2$ are also $\sigma^m$-GPIs of $D$. It is clear that $\deg(f_1)$ and $\deg(f_2)$ are less than or equal to $p$, and $f_2$ is blended with respect to indeterminate $x_1$. Similarly, for $2\leq j\leq q$, set
\begin{align*}
f_{2j-1} = f_{2j-2}(x_1,x_2,\dots,x_{j-1},0,
x_{j+1},\cdots,x_n,\cdots,x_1^{\sigma^m},x_2^{\sigma^m},\dots,x_{j-1}^{\sigma^m},0,x_{j+1}^{\sigma^m},\cdots,x_n^{\sigma^m}),
\end{align*}
and $f_{2j}=f_{2j-2}-f_{2j-1}$. Then, the polynomial $f_{2q}$ is blended and is also a $\sigma^m$-GPI of $D$. From the above reasoning, we obtain the following proposition.

\begin{proposition} 
	Let $D$ be a division ring and $\sigma$ be an anti-automorphism of $D$. If $D$ satisfies a $\sigma^m$-GPI, then $D$ satisfies a $\sigma^m$-GPI that is blended.	
\end{proposition}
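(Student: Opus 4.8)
The plan is to formalize the construction sketched in the paragraph immediately preceding the statement, which already contains the essential idea: successively kill the non-blended indeterminates by substituting zero, and isolate the part of the polynomial that genuinely involves each of them.

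First I would set up the base case carefully. Suppose $f$ is a $\sigma^m$-GPI of $D$ with $q>0$ non-blended indeterminates, say $x_1,\dots,x_q$. Writing $f_1$ for the specialization that sends $x_1$ (hence $x_1^{\sigma},\dots,x_1^{\sigma^m}$) to $0$ and $f_2=f-f_1$, I would observe three things: (i) since the specialization $x_1\mapsto 0$ is an admissible substitution, $f_1$ is again a $\sigma^m$-GPI, hence so is $f_2=f-f_1$; (ii) every monomial of $f$ in which $x_1$ does not occur is cancelled in $f-f_1$, and every monomial in which $x_1$ does occur survives unchanged, so $f_2$ is blended with respect to $x_1$; (iii) the substitution does not introduce any new monomial and does not raise degrees, so $\deg(f_2)\le \deg(f)$, and crucially the set of indeterminates occurring in $f_2$ is contained in that of $f$, and $x_2,\dots,x_q$ are still among the non-blended ones only if they were non-blended for $f$ --- but in any case $f_2$ has at most $q-1$ indeterminates that are both occurring and non-blended with respect to it \emph{except} that we must be a little careful: $f_2$ is blended with respect to $x_1$, but blendedness with respect to $x_2,\dots,x_q$ is what we still need to repair.

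The key step is then the iteration, and this is where the only real subtlety lies. For $2\le j\le q$ I would define $f_{2j-1}$ by substituting $0$ for $x_j$ in $f_{2j-2}$ and $f_{2j}=f_{2j-2}-f_{2j-1}$, exactly as in the excerpt. The point to verify at each stage is that passing from $f_{2j-2}$ to $f_{2j}$ preserves blendedness with respect to $x_1,\dots,x_{j-1}$: this holds because if $x_i$ ($i<j$) occurs in every monomial of $f_{2j-2}$, then after deleting from $f_{2j-2}$ those monomials not containing $x_j$ (which is what $f_{2j-2}-f_{2j-1}$ does), the surviving monomials still all contain $x_i$. Simultaneously $f_{2j}$ becomes blended with respect to $x_j$ by the same argument as in the base case. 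One also has to note that $f_{2j}$ might be the zero polynomial; I would handle this by observing that if some $f_{2j}=0$ then $f_{2j-2}=f_{2j-1}$ is blended with respect to $x_j$ already, and in that case one simply skips to $x_{j+1}$ (equivalently, replace $f_{2j}$ by $f_{2j-2}$ when the difference vanishes) --- so the construction never runs off the end, and after processing $x_q$ the resulting polynomial $g:=f_{2q}$ (or the last nonzero polynomial produced) is nonzero, is a $\sigma^m$-GPI of $D$ since every step produced a $\sigma^m$-GPI, and is blended with respect to every indeterminate occurring in it.

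The main obstacle --- really the only thing beyond bookkeeping --- is the risk that repeated specializations collapse $f$ to $0$ before all non-blended variables have been cleared; the remedy is the observation just described, namely that a specialization producing $0$ means the previous polynomial was already blended in that variable, so nothing is lost. With that point pinned down, the proposition follows: $g$ is a blended $\sigma^m$-GPI of $D$, as required. I would close by remarking that the identical argument applies verbatim with ``GPI'' replaced by ``GRI'' throughout, since a specialization $x_j\mapsto 0$ sends a generalized rational identity to a generalized rational identity wherever the latter is defined; this is what will actually be used later in the paper.
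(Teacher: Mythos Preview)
Your argument for the proposition itself is correct and is exactly the construction the paper sketches in the paragraph preceding the statement; in fact you are more careful than the paper, which simply asserts that $f_{2q}$ is a blended $\sigma^m$-GPI without addressing the possibility that some intermediate $f_{2j}$ vanishes, whereas your ``skip'' mechanism handles this cleanly.

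Your closing remark, however, is mistaken on two counts. First, the argument does \emph{not} transfer verbatim to GRIs: for a generalized rational expression the substitution $x_j\mapsto 0$ need not be admissible (think of $x_j^{-1}$), and even when it is, the difference $f-f|_{x_j=0}$ has no monomial decomposition from which to read off blendedness. Second, this is not what the paper does later: the passage from a $\sigma^m$-GRI to a $\sigma^m$-GPI in the proof of Theorem~\ref{main} proceeds by a Laurent-series expansion in a central parameter $t$ (extracting a nonzero homogeneous coefficient $f_{i_0}$), not by specializing variables to zero. So drop that paragraph; the proof of the proposition stands without it.
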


We continue to consider $f = \sum_{i=1}^\ell P_i \in D\langle X_m \rangle$ where $P_1,P_2,\dots,P_\ell$ are monomials. Then, $f$ is called \textit{$\sigma^m$-multilinear} (or shortly, \textit{$\sigma^m$-linear}) of degree $n$ on indeterminates $x_1,x_2,\dots,x_n$ if each monomial $P_i$ in $f$ satisfies the following properties:
\begin{enumerate}
	\item $\deg(P_i) = n$;
	\item $\sigma^m\text{-}\deg_{x_j}(P_i) = 1$ for all $x_j \in \{x_1,x_2,\dots,x_n\}$.
\end{enumerate}
In other words, the polynomial $f$ is $\sigma^m$-linear if $f$ is a polynomial blended of height $0$.

Suppose $f$ is a blended $\sigma^m$-GPI of $D$ with $\{x_1,x_2,\dots,x_n\}$ being the set of all indeterminates appearing in $f$. Let $p = \deg(f)$ and $p_j = \sigma^m\text{-}\deg_{x_j}(f)$ $(1 \leq j \leq k)$. We write $f = f(x_j)$ to emphasize considering $f$ as a polynomial in the indeterminate $x_j$. If $ht(f) = t > 0$, we choose $x_{j_q} \in X$ such that $x_{j_q}$ does not appear in $f$ and set
\[
\Delta_jf = f(x_j + x_{j_q}) - f(x_j) - f(x_{j_q}).
\]
Then,
\begin{enumerate}
	\item $\deg(\Delta_jf) = \deg(f)$;
	\item $\sigma^m\text{-}\deg_{x_j}(\Delta_jf) = \sigma^m\text{-}\deg_{x_{j_q}}(\Delta_jf) = p_j - 1$;
	\item $\sigma^m\text{-}\deg_{x_{j_k}}(\Delta_jf) \leq \sigma^m\text{-}\deg_{x_{j_k}}(f)$ for $x_{j_k} \neq x_j,x_{j_q}$.
\end{enumerate}
Therefore, $\Delta_jf$ is also a blended $\sigma^m$-GPI of $D$ satisfying $ht(\Delta_jf) \leq t - 1$. Define
\[
\begin{aligned}
\Delta_j^{(0)}f &= f, \\
\Delta_j^{(1)}f &= \Delta_jf, \\
\Delta_j^{(v)}f &= \Delta_j\Delta_j^{(v-1)}f \quad (v \in \mathbb{N}, v > 1).
\end{aligned}
\]
Then, if $\sigma^m\text{-}\deg_{x_j}(f) = p_j$, we have $\sigma^m\text{-}\deg_{x_j}(\Delta_j^{(p_j-1)}f) = 1$. From here, we have the following lemma.

\begin{lemma} Given $f = \sum_{i=1}^\ell P_i \in D\langle X_m \rangle$ is a blended polynomial in indeterminates $x_1, x_2, \dots, x_n$ and has height $t$. Let $P$ be a monomial in $f$ such that $ht(P) = t$. Then, we have that $\Delta_1^{(p_1-1)}\cdots\Delta_n^{(p_n-1)}f$ is a $\sigma^m$-linear polynomial in $\sum_{j=1}^n p_j$ indeterminates, where $p_j = \sigma^m\text{-}\deg_{x_j}(P)$ $(1 \leq j \leq n)$.
\end{lemma}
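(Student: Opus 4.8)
The plan is to follow the operators $\Delta_j$ through, monomial by monomial, keeping track of $\sigma^m$-degrees, and to show that the only monomials of $f$ that can survive the whole composite $\Delta_1^{(p_1-1)}\cdots\Delta_n^{(p_n-1)}$ are those having exactly the same degree profile as $P$; on each of those the operator returns a sum of $\sigma^m$-linear monomials, so the composite is $\sigma^m$-linear in the expected number of indeterminates.

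First I would record the effect of one application of $\Delta_j$ on a single monomial. If $m$ is a monomial with $\sigma^m\text{-}\deg_{x_j}(m)=e$ and $y$ is an indeterminate not occurring in $m$, then $\Delta_j m=m(x_j+y)-m(x_j)-m(y)$ is the sum of all monomials obtained from $m$ by replacing the $e$ occurrences of $x_j$ (each in whatever power of $\sigma$ it carries) by $x_j$ or $y$ in the same power, in all ways in which both letters actually occur, the remaining letters of $m$ being left untouched. Iterating, $\Delta_j^{(k)}m$ — formed with $k$ pairwise distinct fresh indeterminates $y_1,\dots,y_k$ — equals the sum over all surjections from the set of the $e$ occurrences of $x_j$ onto $\{x_j,y_1,\dots,y_k\}$. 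Hence $\Delta_j^{(k)}m=0$ as soon as $e\le k$, while if $e=k+1$ those surjections are exactly bijections and every resulting monomial is $\sigma^m$-linear in $\{x_j,y_1,\dots,y_k\}$, the $\sigma^m$-degree in every other indeterminate being left unchanged. Since the operators $\Delta_1^{(p_1-1)},\dots,\Delta_n^{(p_n-1)}$ act on disjoint blocks of letters and use pairwise disjoint batches of fresh indeterminates, they commute and the above applies blockwise.

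Next comes the degree bookkeeping. Because $f$ is blended in $x_1,\dots,x_n$, each $x_j$ occurs in each monomial $P_i$, so $\deg(P_i)=ht(P_i)+n\le ht(f)+n=t+n=ht(P)+n=\deg(P)=\sum_{j=1}^n p_j$. Writing $d_{ij}=\sigma^m\text{-}\deg_{x_j}(P_i)$, the first step gives $\Delta_1^{(p_1-1)}\cdots\Delta_n^{(p_n-1)}P_i=0$ unless $d_{ij}\ge p_j$ for every $j$; but $\sum_j d_{ij}=\deg(P_i)\le\sum_j p_j$, which forces $d_{ij}=p_j$ for all $j$. So exactly the monomials of $f$ with degree profile $(p_1,\dots,p_n)$ (among them $P$ itself) contribute, and on each of them $\Delta_1^{(p_1-1)}\cdots\Delta_n^{(p_n-1)}$ yields a sum of monomials that are $\sigma^m$-linear in the $\sum_{j=1}^n p_j$ indeterminates consisting of $x_1,\dots,x_n$ together with the $p_j-1$ fresh indeterminates attached to each $x_j$. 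Consequently $\Delta_1^{(p_1-1)}\cdots\Delta_n^{(p_n-1)}f$ is $\sigma^m$-linear in those $\sum_{j=1}^n p_j$ indeterminates.

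The point I expect to require the most care — and the one I would flag as the main obstacle — is to be sure the resulting polynomial is genuinely nonzero, that is, that it truly involves $\sum_j p_j$ indeterminates rather than collapsing because the contributions of the various profile-$(p_1,\dots,p_n)$ monomials of $f$ cancel one another. I would arrange this by fixing at the outset a monomial order on $M(\AA)$ and taking $P$ to be a monomial of $f$ of maximal degree whose underlying word is largest in that order; one then checks that $\Delta_1^{(p_1-1)}\cdots\Delta_n^{(p_n-1)}P$ produces a monomial that no other monomial of $f$ can produce, so that no cancellation can kill it. With such a choice of $P$ the $\sigma^m$-linear polynomial above is nonzero, which completes the proof.
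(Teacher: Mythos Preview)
Your argument follows the same route as the paper's: analyze $\Delta_j$ monomial by monomial, show that only those $P_i$ with $\sigma^m\text{-}\deg_{x_j}(P_i)=p_j$ for every $j$ survive (the rest being killed because some $x_j$-degree is too small, with blendedness and the height bound forcing equality), and conclude $\sigma^m$-linearity in $\sum_j p_j$ indeterminates. Your surjection description of $\Delta_j^{(k)}$ is a clean repackaging of what the paper does more computationally via the decomposition $\Delta_{j_0}P=\sum_u h_u$.

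On the point you flagged as the main obstacle—nonvanishing—the paper's argument is simpler than your monomial-order device and, importantly, does not require any special choice of $P$ (the lemma fixes $P$, it does not let you pick it). Every monomial produced by $\Delta_1^{(p_1-1)}\cdots\Delta_n^{(p_n-1)}P_i$ maps back to $P_i$ under the projection that sends each fresh indeterminate to the $x_j$ it was cloned from; hence monomials arising from distinct basis elements $P_i\neq P_{i'}$ of $M(\AA)$ remain distinct and cannot cancel, and in particular the $\prod_j p_j!$ monomials contributed by $P$ itself survive.
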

\begin{proof}
	If $f$ has height $t=0$, then $p_j=1$ and $\Delta_jf=f$ is $\sigma^m$-linear for every $1\leq j\leq n$, thus the lemma is proved. Consider the case $t>0$ and $P$ is a monomial in $f$ such that $\sigma^m\text{-ht}(P)=t$. With $p_j=\sigma^m\text{-}\deg_{x_j}(P)$, we have
	\[
	\Delta_1^{(p_1-1)}\cdots\Delta_n^{(p_n-1)}f=\sum\Delta_1^{(p_1-1)}\cdots\Delta_n^{(p_n-1)}P_i.
	\]
	Choose $j_0$ such that $\sigma^m\text{-}\deg_{x_{j_0}}(P)=p_{j_0}>1$, $\deg_{x_i}(P)=p_i=1$ and $1\leq j_0<i\leq n$. Then
	\[
	\Delta_1^{(p_1-1)}\cdots\Delta_n^{(p_n-1)}P=\Delta_1^{(p_1-1)}\cdots\Delta_{j_0}^{(p_{j_0}-1)}P.
	\]
	Let $x_{n+1}\in X$, set
	\[
	\Delta_{j_0}P=P(x_{j_0}+x_{n+1})-P(x_{j_0})-P(x_{n+1})=\sum_{u=1}^{p_{j_0}-1}h_u
	\]
	in which $h_u$ is a polynomial such that 			$$\sigma^m\text{-}\deg_{x_{n+1}}(h_u)=u\text{ and } \sigma^m\text{-}\deg_{x_{j_0}}(h_u)=p_{j_{0}}-u.$$
	Each polynomial $h_u$ is a sum of $C_{p_{j_0}}^u$ monomials $P_{u_k}$, where each monomial $P_{u_k}$ maps to $P$ under the projection $x_{n+1}\mapsto x_{j_0}$ and $\sigma^m\text{-}\deg_{x_{j_0}}(P_{u_k})=p_{j_0}-u$.
	Hence,
	\[
	\Delta_{j_0}^{(p_{j_0}-1)}P=\Delta_{j_0}^{(p_{j_0}-2)}\Delta_{j_0}^{(1)}f=\Delta_{j_0}^{(p_{j_0}-2)}\sum_{u=1}^{p_{j_0}-1}h_u=\sum_{u=1}^{p_{j_0}-1}\Delta_{j_0}^{(p_{j_0}-2)}h_u.
	\]
	For $u>1$, since $\sigma^m$-$\deg_{x_{j_0}} (h_u) =p_{j_0}-u<p_{j_0}-1$, we have $\Delta_{{{j_0}}}^{(p_{j_0}-2)}(h_u)=0$. Therefore,
	\[
	\Delta_{{j_0}}^{p_{j_0}-1} P=\Delta^{(p_{j_0}-2)}_{{j_0}}h_1.
	\]
	By induction, we obtain that  $\Delta_{j}^{p_j-1}P$ has
	\[
	C^1_{p_j}C^1_{p_j-1}\cdots C^1_2= p_j(p_j-1)\cdots 2=p_j!
	\]
	monomials, each of which maps to $P$ under the projection $x_{e}\mapsto x_{j_0}$ (and here we denote $e=n+1, n+2, \dots, n+(p_{j_0}-1$)). Furthermore, we have 
	\[
	\sigma^m\text{-}\deg_{x_{j_0}}( \Delta_{j_0}^{(p_{j_0}-1)}P)=\sigma^m\text{-}\deg_{x_{e}} (\Delta_{x_{e}}^{(p_{j_0}-1)}P)=1
	\]
	for $e\in\{n+1, n+2, \cdots, n+(p_{j_0}-1)\}$. 
	
	The remaining part of the lemma can be established by induction on $n$.  Finally, we obtain $\Delta^{(p_1-1)}_{{1}}\cdots \Delta^{(p_n-1)}_{n} P$ as a linear polynomial in $\sum^{n}_{j=1} p_j$ indeterminates. Moreover, these monomials all map to $P$ under the projection satisfying the property: for every $1\leq j\leq n$, the projection of $x_{e}$ to $x_{j}$ is such that $$  j+\sum_{u=j+1}^{n} p_u\leq  e\leq j+\sum_{u=j}^{n} p_u.$$
	Let $ P_{i_0}\ne P$ be a monomial of $f$. If  there exists  $1\leq j_k\leq n$ satisfying the inequality  $\sigma $-$\deg_{x_{{j_k}}}(P_{i_0})< p_{j_k}=\sigma$-$\deg_{x_{j_k}}(P) $, then $\Delta_{{j_k}}^{(p_{j_k}-1)}P_{i_0}=0 $. So 
	\begin{multline*}
	\Delta^{(p_1-1)}_{{1}}\cdots\Delta^{(p_{j_k-1}-1)}_{j_k-1}\Delta^{(p_{j_k}-1)}_{j_k+1}\Delta^{(p_{j_k+1}-1)}_{j_k+1}\cdots \Delta^{(p_n-1)}_{n} P_{i_0} \\ 
	= \Delta^{(p_1-1)}_{{1}}\cdots\Delta^{(p_{j_k-1}-1)}_{j_k-1}\Delta^{(p_{j_k+1}-1)}_{j_k+1}\cdots \Delta^{(p_l-1)}_{x_{k_l}}  \left(\Delta^{(p_{j_k}-1)}_{x_{k_{j_k}}} P_{i_0}\right) =0.
	\end{multline*}    
	If $\Delta^{(p_1-1)}_{1}\cdots \Delta^{(p_n-1)}_{{n}} P_{i_0}\ne 0$ then $p_j^{'}=\sigma^m$-$\deg_{x_j}(P_{i_0})\geq p_j ,$ with $1\leq j\leq n. $ However, observe that $\sum (p_j-1)=t\geq \sum (p^{'}_j-1)$, which implies that $ p^{'}_{j}=p_j$ for $1\leq j\leq n.$ Using a similar method as for $P$, we have that $\Delta^{(p_1-1)}_{1}\cdots \Delta^{(p_n-1)}_{{n}} P_{i_0}$ is a multilinear polynomial in $\sum p_j$  indeterminates just like $\Delta^{(p_1-1)}_{1}\cdots \Delta^{(p_n-1)}_{n} P$. Hence, $\Delta^{(p_1-1)}_{1}\cdots \Delta^{(p_n-1)}_{{n}} f$ is a blended $\sigma^m$-linear polynomial.
\end{proof}

From the lemma above, we immediately obtain the following corollary.
\begin{corollary} \label{2.3}
	If the division ring $D$ satisfies an $\sigma^m$-GPI of degree $n$, then there exists an $\sigma^m$-GPI of $D$ which is blended $\sigma^m$-linear of degree not exceeding $n$.
\end{corollary}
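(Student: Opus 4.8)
The plan is to chain together the Proposition and the Lemma proved above. Suppose $D$ satisfies a $\sigma^m$-GPI $f$ with $\deg(f)=n$. First I would apply the Proposition to replace $f$ by a blended $\sigma^m$-GPI of $D$; the reduction preceding the Proposition only discards monomials (one passes from $f$ to $f_1,f_2$, then to $f_3,f_4$, and so on), so the degree never increases and the new blended GPI still has degree at most $n$. Relabel this blended GPI as $f$ and set $t=ht(f)$.

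If $t=0$, then $f$ is a blended polynomial of height $0$, which is by definition a $\sigma^m$-linear polynomial, so $f$ itself is already the GPI we want. If $t>0$, I would pick a monomial $P$ of $f$ with $ht(P)=t$, put $p_j=\sigma^m\text{-}\deg_{x_j}(P)$ for $1\leq j\leq n$, and form $g=\Delta_1^{(p_1-1)}\cdots\Delta_n^{(p_n-1)}f$. Two checks are needed. First, $g$ is again a $\sigma^m$-GPI of $D$: each operator $\Delta_j^{(v)}$ is built from substitutions of the shape $x_j\mapsto x_j+x_e$ with a fresh indeterminate $x_e$, together with subtractions of two instances of the current polynomial, and each such operation carries a $\sigma^m$-GPI to a $\sigma^m$-GPI, since the identity holds on every admissible tuple and in particular on tuples in which the $j$-th entry is a sum $r_j+r_e$ of admissible entries; hence $g$ vanishes identically wherever it is defined. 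Second, $\deg(g)=\deg(f)\leq n$, because each application of $\Delta_j$ preserves the degree (this is item (1) among the listed properties of $\Delta_jf$), so $g$ involves at most $n$ indeterminates, each of $\sigma^m$-degree one. By the Lemma applied with this choice of $P$, the polynomial $g$ is blended and $\sigma^m$-linear; and $g\neq 0$ because the monomials originating from $P$ polarize into $p_1!\cdots p_n!$ distinct basis monomials, each with coefficient $1$, which cannot be cancelled by the polarizations of the other surviving monomials $P_{i_0}\neq P$, since the latter project onto $P_{i_0}$ rather than onto $P$ under the fixed projection $x_e\mapsto x_j$ described in the Lemma. Therefore $g$ is a blended $\sigma^m$-linear $\sigma^m$-GPI of $D$ of degree at most $n$, as required.

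The only genuinely delicate point is the non-triviality of $g$: a priori the iterated differencing could annihilate every monomial. This is exactly why $P$ must be chosen of maximal height. The Lemma shows that any monomial whose $x_j$-degree is strictly smaller than $p_j$ in some coordinate $j$ is killed by $\Delta_j^{(p_j-1)}$, while a height count, namely $\sum_{j}(p_j-1)=t=ht(f)$, forces every surviving monomial to have exactly the same degree vector $(p_1,\dots,p_n)$ as $P$; combined with the disjointness of the polarization supports of distinct original monomials, this gives $g\neq 0$. Beyond invoking the Proposition and the Lemma, essentially no new computation is required, and the corollary is an assembly of the two preceding results.
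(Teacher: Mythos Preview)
Your argument is correct and is exactly the route the paper has in mind: the paper simply states that the corollary follows ``immediately'' from the preceding Lemma (together with the Proposition), and you have supplied the details the paper omits, namely that the $\Delta$-operators preserve the property of being a $\sigma^m$-GPI, that the degree does not increase, and that the choice of $P$ with maximal height guarantees $g\neq 0$. One small notational slip: you use $n$ both for the degree of the original GPI (as in the Corollary) and for the number of indeterminates (as in the Lemma), which are not the same symbol a priori; this does not affect the validity of the argument.
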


Now, we show the main result of this section by considering the case where $X=\{x\}$. In this case, if $f$ is a linear $\sigma^m$-form in $D\langle X_m\rangle$, then $f$ can be represented as $f=\sum_{i=0}^k g_i(x^{\sigma^i})$, where $g_i(x^{\sigma^i})=\sum_{j=1}^{n_i} a_{ij}x^{\sigma^i}b_{ij}$ and $0\leq k\leq m$.

\begin{lemma}\label{2.4}
	Let $D$ be a division ring with an infinite center and $X=\{x\}$. Suppose $f\neq 0$ is a $\sigma^m$-generalized polynomial in $D\langle X_m\rangle$. If $D$ is not centrally finite, then there exists an element $r\in D$ such that $f(r)\neq 0$.
\end{lemma}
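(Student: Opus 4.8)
The plan is to prove the contrapositive: assuming that $D$ is not centrally finite, I will show that a nonzero $\sigma^m$-generalized polynomial $f\in D\langle X_m\rangle$ in the single variable $x$ cannot be a $\sigma^m$-GPI of $D$. Using that $F$ is infinite (to pass to the homogeneous components of $f$ in $x$, each of which is again a $\sigma^m$-GPI) together with the linearization machinery of this section (Corollary~\ref{2.3}), I first reduce to the case where $f$ is $\sigma^m$-linear, that is,
\[
f=\sum_{i=0}^{k}g_i\bigl(x^{\sigma^i}\bigr),\qquad g_i(y)=\sum_{j=1}^{n_i}a_{ij}\,y\,b_{ij},
\]
where, after collecting terms, $b_{i1},\dots,b_{in_i}$ are $F$-linearly independent for each $i$, and where at least one $g_i$ is nonzero in $D\langle X_m\rangle$ — equivalently, $\sum_j a_{ij}\otimes b_{ij}\neq0$ in $D\otimes_F D^{\mathrm{op}}$; let $k$ be the largest such index. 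The first ingredient, which I would prove by induction on the number of terms via a commutator identity in $D\otimes_F D^{\mathrm{op}}$ (normalising one $b_j$ to $1$ and comparing $\xi$ with $[1\otimes x,\xi]$), is the faithfulness of the action of $D\otimes_F D^{\mathrm{op}}$ on $D$: if $\sum_j a_jrb_j=0$ for all $r\in D$ and the $b_j$ are $F$-independent, then all $a_j=0$. In particular a nonzero generalized $D$-linear form is never an identity, which disposes of the case $k=0$.

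For $k\ge1$ the strategy is to eliminate the twists. Because $D$ is not centrally finite, no odd power $\sigma^{2\ell+1}$ can coincide with a map $r\mapsto\sum_t c_trd_t$: otherwise, expanding the anti-automorphism relation $\sigma^{2\ell+1}(xy)=\sigma^{2\ell+1}(y)\,\sigma^{2\ell+1}(x)$ yields an ordinary generalized polynomial identity of $D$ in two variables whose part supported on monomials with $x$ preceding $y$ equals $\sum_t c_t\,xy\,d_t$, which is nonzero by the faithfulness just stated and cannot cancel against the rest (supported on monomials with $y$ preceding $x$); thus $D$ would satisfy a nontrivial ordinary GPI and Amitsur's theorem \cite{s1} would force $D$ centrally finite, a contradiction. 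On the other hand, whenever an even power $\sigma^{2\ell}$ equals an inner automorphism $\mathrm{inn}_u$, we have $r^{\sigma^{2\ell}}=uru^{-1}$ and $r^{\sigma^{2\ell+1}}=(u^{\sigma})^{-1}r^{\sigma}u^{\sigma}$, so the summands $g_{2\ell}(x^{\sigma^{2\ell}})$ and $g_{2\ell+1}(x^{\sigma^{2\ell+1}})$ become generalized $D$-linear forms in $x$ and $x^{\sigma}$ and can be absorbed into the $g_0$- and $g_1$-slots; since $a\otimes b\mapsto av^{\pm1}\otimes v^{\mp1}b$ is a ring automorphism of $D\otimes_F D^{\mathrm{op}}$, the absorbed copy of the leading term $g_k$ remains nonzero, so after finitely many such absorptions we reach a $\sigma^m$-linear identity with strictly fewer twists present, still with a nonzero leading term, and with all remaining twists (and all their pairwise quotients) X-outer. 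At this point a Kharchenko-type independence theorem for automorphisms and anti-automorphisms of $D$ — which I would establish here, again using the faithfulness lemma and the substitutions $x\mapsto sxt$, under which every summand stays a $D$-linear form in the corresponding twisted variable (the order of $s^{\sigma^i}$ and $t^{\sigma^i}$ depending on the parity of $i$) — forces each remaining $g_i(r^{\sigma^i})$ to vanish identically, hence each $g_i=0$; this contradicts the nonvanishing of the leading term and finishes the argument.

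I expect the main obstacle to be the Kharchenko-type separation step: passing from a relation $\sum_i g_i(r^{\sigma^i})=0$ with X-outer twists to the vanishing of the individual terms. The natural approach is an induction that, using the substitution family $x\mapsto sxt$ together with the faithfulness lemma, decouples the term carrying the extreme twist from the remaining ones, in the style of Martindale's and Kharchenko's reduction of generalized identities with automorphisms but adapted to allow anti-automorphisms; certifying that this decoupling never collapses to a trivial identity will be the delicate point. A secondary, bookkeeping-type difficulty is to make the reduction to the $\sigma^m$-linear case rigorous in a single variable — the standard linearization introduces auxiliary indeterminates, so one must either run the argument with several variables and specialise back, or linearize directly using the substitutions $x\mapsto x+\lambda$ $(\lambda\in F)$ and the infiniteness of $F$ — and to check throughout that none of the absorptions or specialisations silently trivializes the identity.
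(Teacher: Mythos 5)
Your strategy (reduce to a $\sigma^m$-linear form $\sum_{i=0}^{k}g_i(x^{\sigma^i})$, use faithfulness of the $D\otimes_F D^{\mathrm{op}}$-action, invoke Amitsur via an ordinary GPI whenever a needed independence fails) starts out parallel to the paper, but there is a genuine gap at the decisive point: the Kharchenko-type independence theorem that passes from $\sum_i g_i(r^{\sigma^i})=0$ for all $r$ to the vanishing of the individual $g_i$ when the remaining twists are outer is only announced (``which I would establish here'') and then flagged by you yourself as the main obstacle; it is never proved. That separation statement \emph{is} the content of Lemma~\ref{2.4}, so as written you have a plan whose core is missing rather than a proof. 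The preliminary absorption step is also not airtight: when an even power $\sigma^{2\ell}$ is inner you only argue that the \emph{absorbed copy} of the leading term is nonzero, but what matters is its sum with the $g_0$- (or $g_1$-) term already present, and that sum can vanish --- for instance $f=x^{\sigma^{2\ell}}-uxu^{-1}$ with $\sigma^{2\ell}=\mathrm{inn}_u$ collapses to the zero polynomial after absorption --- so ``still with a nonzero leading term'' needs an argument you do not supply; you acknowledge this possible trivialization at the end but offer no repair.

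For comparison, the paper's proof never introduces an inner/outer dichotomy. After the same reduction to a blended $\sigma^m$-linear form and the same use of faithfulness (\cite[Corollary~6.1.3]{k}) to settle $k=0$, it proceeds by downward induction on the largest twist exponent: assuming $\sum_{i}g_i(r^{\sigma^i})=0$ for all $r$, it uses Amitsur's theorem \cite{s1} (exactly as you propose) to choose $r$ making $a_{01},\dots,a_{0n_0},a_{mn_m}ra_{01},\dots,a_{mn_m}ra_{0n_0}$ linearly independent, then forms $g(x)=f\bigl(x\,\sigma^{-m}(a_{mn_m})\sigma^{-m}(r)\bigr)-a_{mn_m}r\,f(x)$, a nonzero $\sigma^m$-GPI in which (after normalising $n_m=1$) the variable $x^{\sigma^m}$ no longer occurs, contradicting the induction hypothesis; the degenerate case $g_0=0$ is handled by substituting $\sigma^{-1}(t)$ for the variable, which lowers every twist by one. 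In other words, the concrete substitute--multiply--subtract elimination carried out in the paper is precisely the decoupling you defer to an unproven Kharchenko-type theorem; until you either prove such a theorem (including its anti-automorphism cases and the possible cancellations above) or perform an explicit elimination of the top twist, the proposal does not establish the lemma.
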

\begin{proof} By Corollary~\ref{2.3}, we assume that $f$ is blended $\sigma^m$-linear. By the hypothesis, $f$ can be represented as
	$$f=\sum_{i=0}^kg_i(x^{\sigma^i})$$
	where $g_i(x^{\sigma^i})=\sum_{j=1}^{n_i}a_{ij}x^{\sigma^i}b_{ij}$.
	We can choose the largest non-negative integer $0\leq k\leq m$ such that $g_k(x^{\sigma^k})\neq 0$. Consider the case $k=0$, then
	$$f=f(x)=g_0(x)=\sum_{j=1}^{n_0}a_{0j}xb_{0j}.$$
	Suppose $\sum_{j=1}^{n_0}a_{0j}rb_{0j}=0$ for all $r\in D$. With the assumption at hand, we deduce that $f(r)=\sum_{j=1}^{n_0}a_{0j}rb_{0j}\in DxD$ is a GPI of $D$. According to \cite[Corollary 6.1.3]{k}, then $\sum_{j=1}^{n_0}a_{0j}xb_{0j}= 0$, which contradicts the assumption. Hence, there exists an element $r\in D$ such that
	$$f(r)=\sum_{j=1}^{n_0}a_{0j}rb_{0j}\neq 0.$$
	Suppose the lemma holds for all $k\leq m-1$. For $k=m$, we have
	$$f =\sum _{i=0}^{m}g_i(x^{\sigma ^{i}})\in D\left\langle X_m\right\rangle\backslash\{0\}.$$
	Suppose conversely that we have
	$\sum _{i=0}^{m}g_i(r^{\sigma ^{i}})= 0,$ for all $r\in D$. Consider the polynomial $g_0(x)=\sum^{n_0}_{j=1}  a_{0j}xb_{0j}$ in two cases $n_{0}=0$ and $n_{0}>0$. First, we consider the case $n_0>0$. If the set $\{a_{01}, a_{02}, \dots, a_{0n_0}\}$ of $n_0$ elements is linearly independent over $F$, then the set of $2 n_0$ polynomials
	\[
    a_{01}, a_{02}, a_{03}, \dots, a_{0n_0}, a_{mn_n}xa_{01}, a_{mn_m}xa_{02}, a_{mn_m}xa_{03},\dots,  a_{mn_m}xa_{0n_0}
    \]
	is also linearly independent over $F$.
	Indeed, consider
	\[
    \alpha_{1} a_{01}+\cdots +\alpha_{n}a_{0n_0}+\beta_1 a_{mn_n}xa_{01}+\cdots+\beta_n a_{mn_m}xa_{0n_0} =0,
    \]
	where $\alpha_j, \beta_j\in F$ and $j\in\{1, 2, \dots,n_0\}$. Then, we have
	\[
    \begin{cases}\alpha_1a_{01}+\alpha_2 a_{02}+\cdots +\alpha_{n}a_{0n_0}&=0 \\ \beta_1 a_{mn_n}xa_{01}+\beta_2 a_{mn_n}xa_{02}+\cdots+	\beta_n a_{mn_m}xa_{0n_0} &= 0.\end{cases}
    \]
	Since $\{a_{01}, a_{02}, \dots, a_{0n_0}\}$ is linearly independent over $F$, we have the following relation: $\alpha_1=\alpha_2=\cdots =\ \alpha_n=0.$ Furthermore, as $\{a_{01}, a_{02}, \dots, a_{0n_0}\}$ is linearly independent over $F$, the set 
    \[
    \{  a_{mn_n}\otimes_F a_{01}, a_{mn_m}\otimes_F a_{02}, \dots,  a_{mn_m}\otimes_F a_{0n_0}\}
    \]
    is also linearly independent over $F$. According to \cite[Remark 6.1.1]{k}, the set 
    \[
    \{a_{mn_n}xa_{01}, a_{mn_m}xa_{02}, \dots,  a_{mn_m}xa_{0n_0}\}
    \]
    is linearly independent over $F$, which implies $\beta _1=\beta_2=\cdots =\beta_n=0$. Thus, there exists an element $r\in D$ such that
	\[
    a_{01}, a_{02}, a_{03}, \dots, a_{0n_0}, a_{mn_n}ra_{01}, a_{mn_m}ra_{02}, a_{mn_m}ra_{03},\dots,  a_{mn_m}ra_{0n_0} 
    \]
	are linearly independent over $F$. Suppose these elements are linearly dependent over $F$ for all $r\in D$, meaning that there exist $\gamma_1,\gamma_2,\dots,\gamma_{2n}$ not all zero such that
	$$\gamma_1a_{01}+\gamma_2a_{02}+\dots+\gamma_{2n}a_{mn}ra_{0n_0}=0$$
	for all $r\in D$, implying that $D$ satisfies a GPI. According to \cite{s1}, $D$ has a finite center, which contradicts the assumption.
	Let 
	$$g=\sum _{i=0}^{m} g_i(\sigma^i(x\sigma^{-m}(a_{mn_m})\sigma^{-m}(r))-a_{mn_m}r\sum _{i=0}^m g_i( x^{\sigma^i}).$$
	Then,
	\begin{align*}
	g&= \sum_{j=1}^{n_0} a_{0j}x\sigma^{-m}(a_{mn_m})\sigma^{-m}(r)b_{0j}- \sum_{j=1}^{n_0} a_{mn_m}ra_{0j}x b_{0j}\\
	&+\sum_{j=1}^{n_1}a_{1j}\sigma^{1-m}(r)\sigma^{1-m}(a_{mn_m})x^{\sigma }b_{1j}-\sum _{j=1}^{n_1}a_{mn_m} r a_{1j}x b_{1j}\\
	&+ \cdots +\sum_{i=1}^{n_m-1}(a_{mj}r a_{mn_m}-a_{mn_m}r a_{mi})x^{\sigma^m}b_{mi}.
	\end{align*}
	Since $$a_{01}, a_{02}, a_{03}, \dots, a_{0n_0}, a_{mn_m}ra_{01}, a_{mn_m}ra_{02}, a_{mn_m}ra_{03},\dots,  a_{mn_m}ra_{0n_0}$$ are linearly independent over $F$, $$a_{01}\otimes_F\sigma^{-m}(a_{mn_m})\sigma^{-m}(r)b_{01},\dots,a_{0n_0}\otimes_F\sigma^{-m}(a_{mn_m})\sigma^{-m}(r)b_{0n_0},$$ $$a_{mn_m}ra_{01}\otimes_F b_{01},\dots, a_{mn_m}ra_{0n_0}\otimes_F b_{0n_0}$$ are also linearly independent over $F$. According to \cite[Remark 6.1.1]{k}, there exists an isomorphism $\phi: D\otimes_FD\to DxD$ such that $a\otimes_Fb \mapsto axb$. Since 
	$$ \sum_{j=1}^{n_0} a_{0j}x\sigma^{-m}(a_{mn_m})\sigma^{-m}(r)b_{0j}- \sum_{j=1}^{n_0} a_{mn_m}ra_{0j}x b_{0j}$$$$= \phi\big(\sum_{j=1}^{n_0} a_{0j}\otimes\sigma^{-m}(a_{mn_m})\sigma^{-m}(r)b_{0j}- \sum_{j=1}^{n_0} a_{mn_m}ra_{0j}\otimes b_{0j}\big),$$ we deduce that $\phi$ is an isomorphism, and $$\sum_{j=1}^{n_0} a_{0j}\otimes\sigma^{-m}(a_{mn_m})\sigma^{-m}(r)b_{0j}- \sum_{j=1}^{n_0} a_{mn_m}ra_{0j}\otimes b_{0j}\neq 0,$$ so the polynomial
	$$ \sum_{j=1}^{n_0} a_{0j}x\sigma^{-m}(a_{mn_m})\sigma^{-m}(r)b_{0j}- \sum_{j=1}^{n_0} a_{mn_m}ra_{0j}x b_{0j}\not\equiv 0.$$  
	Therefore, $ g\not\equiv 0$. Furthermore, for all $r\in D$, $g(r)=0$. Thus, the polynomial $g $ is a $\sigma^m$-GPI of $D$. We can choose $n_m=1$. Then, $g$ is a polynomial not containing $x^{\sigma^m}$. Thus, $D$ satisfies a $\sigma^m$-GPI not containing $x^{\sigma^m}$. According to Corollary \ref{2.3}, $D$ satisfies a linear mixed $\sigma^m$-GPI of the form
	$$h=\sum_{i=0}^{k} h_i(x^{\sigma^i}), $$
	where $h_i(x^{\sigma^i})=\sum_{j=1}^{n_k} c_{ij}x^{\sigma^i} d_{ij}$ and $k\leq (m-1)$. This contradicts the initial assumption. Now, consider the case $n_0=0$. Then,
	\begin{align*}
	f=f( x^{\sigma}, \dots, x^{\sigma^m} )&=\sum_{i=0}^{m}g_i(x^{\sigma^i})\\
	&= \sum_{j=1}^{n_1} a_{1j}x^{\sigma}b_{1j}+ \sum_{j=0}^{n_2}a_{2j}x^{\sigma^2}b_{2j}+\cdots +\sum_{j=0}^{n_m}a_{mj}x^{\sigma^m}b_{mj}.
	\end{align*}
	Since $\sigma $ is an {anti-automorphism} of $D$, for $t\in D$ there exists $r\in D$ such that $\sigma(r)=t$. Therefore, for every $t\in D $, 
	\begin{align*}
	0&=f(   r^{\sigma}, \dots, r^{\sigma^{m}})\\
	&=\sum_{j=1}^{n_1} a_{1j}tb_{1j}+ \sum_{j=0}^{n_2}a_{2j}t^{\sigma}b_{2j}b_{2j}+\cdots +\sum_{j=0}^{n_m}a_{mj}t^{\sigma^{m-1}}b_{mj}.
	\end{align*}
	Thus, $D$ satisfies a $\sigma^m$-GPI of the form 
	$$\sum_{j=1}^{n_1} a_{1j}xb_{1j}+ \sum_{j=0}^{n_2}a_{2j}x^{\sigma}b_{2j}b_{2j}+\cdots +\sum_{j=0}^{n_m}a_{mj}x^{\sigma^{m-1}}b_{mj},	$$ 
	which contradicts the assumption. Therefore, there exists an element $r\in D$ such that $f(r)\neq 0$.
\end{proof}
\section{Proof of the main result}  
In this section, we will present the proof of Theorem~\ref{main}. We start with a very classical result. In fact, in the previous section, we occasionally cited these results directly, but as mentioned, the previous section is supplementary. Therefore, for the sake of clarity, we restate these results here.
\begin{lemma}\label{l3.1} Let $D$ be a division ring with center $F$. If $F$ is infinite, then the following assertions are equivalent. 
	\begin{enumerate}
		\item $D$ is centrally finite.
		\item $D$ satisfies a PI.
		\item $D$ satisfies a GPI.
		\item $D$ satisfies a GRI.
	\end{enumerate} 
\end{lemma}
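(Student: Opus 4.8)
The plan is to prove the cycle of implications $(1)\Rightarrow(2)\Rightarrow(3)\Rightarrow(4)\Rightarrow(1)$; all four assertions are classical, and only the last passage carries genuine content, which is why I would not reprove it here.

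I would first dispatch $(1)\Rightarrow(2)$. Assuming $\dim_F D=n<\infty$, the left regular representation $a\mapsto(x\mapsto ax)$ is an injective homomorphism of $F$-algebras $D\hookrightarrow\mathrm{End}_F(D)\cong M_n(F)$. Since $M_n(F)$ satisfies a polynomial identity over its center---for instance the standard identity of degree $2n$, by the Amitsur--Levitzki theorem, whose coefficients lie in the prime subfield and hence in $F$---so does $D$, which gives a nonzero PI of $D$ over $F$.

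Next, $(2)\Rightarrow(3)$ and $(3)\Rightarrow(4)$ I would obtain merely by unwinding the relevant embeddings. The canonical map $F\langle X\rangle\hookrightarrow D\langle X\rangle=D*_FF\langle X\rangle$ is injective, so a nonzero $f\in F\langle X\rangle$ is a nonzero generalized polynomial over $D$; if it vanishes under every substitution from $D$, then it is a GPI of $D$. Likewise $D\langle X\rangle$ embeds in its maximal right ring of quotients $D(X)$, so every generalized polynomial is a generalized rational polynomial, and any GPI of $D$ is in particular a GRI of $D$.

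Finally, $(4)\Rightarrow(1)$ is precisely Amitsur's theorem \cite{s1}: a division ring whose center $F$ is infinite and which satisfies a generalized rational identity is centrally finite; this is also the only step where the hypothesis ``$F$ infinite'' is used. Thus the only deep ingredient is this one citation, and since we are free to quote classical results there is no real obstacle here. (Alternatively, one could close the cycle via Kaplansky's structure theorem for primitive PI-rings to get $(2)\Rightarrow(1)$ directly, but routing through $(3)$ and $(4)$ keeps the four conditions on an equal footing and records the two trivial inclusions explicitly.)
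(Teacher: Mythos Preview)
Your proof is correct and follows essentially the same route as the paper: both establish the trivial inclusions $(2)\Rightarrow(3)\Rightarrow(4)$ and cite classical results for the substantive directions, with the paper invoking Kaplansky \cite{i} for $(1)\Leftrightarrow(2)$ and Chiba \cite{chiba_96} for $(4)\Rightarrow(1)$, while you supply an Amitsur--Levitzki argument for $(1)\Rightarrow(2)$ and cite Amitsur \cite{s1} for $(4)\Rightarrow(1)$. These are cosmetic differences in citation and level of detail, not differences in strategy.
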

\begin{proof} For an original proof of the equivalence between (1) and (2), we refer to \cite[Theorem 1]{i}. In addition, it is clear that every PI is, in particular, a GPI, and every GPI is a GRI, so we have the implications from (2) to (3) and from (3) to (4). A proof that (4) implies (1) can be found in \cite{chiba_96}. Therefore, assertions (1)–(4) are all equivalent.
\end{proof}
The next result is an important intermediate theorem.
\begin{proposition}\label{MdGPI}
	Let $D$ be a division ring with infinite center $F$, and $\sigma$ be an anti-automorphism of $D$. If $D$ satisfies a $\sigma^m$-GPI, then $D$ satisfies a GPI. Furthermore, if $D$ satisfies a $\sigma^m$-PI, then $D$ satisfies a PI.
\end{proposition}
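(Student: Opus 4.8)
The plan is to reduce the proposition, via Lemma~\ref{l3.1}, to the single implication that if $D$ satisfies a $\sigma^m$-GPI then $D$ is centrally finite, and then to prove this implication by induction on the number $n$ of indeterminates, with Lemma~\ref{2.4} serving as the base case $n=1$. Granting the implication, the first assertion follows because a centrally finite division ring over an infinite centre satisfies a GPI (Lemma~\ref{l3.1}), and the ``Furthermore'' part follows as well: a $\sigma^m$-PI is in particular a $\sigma^m$-GPI, hence $D$ is centrally finite, hence $D$ satisfies a PI by Lemma~\ref{l3.1}. So assume, towards a contradiction, that $D$ is not centrally finite yet satisfies a nonzero $\sigma^m$-GPI. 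By Corollary~\ref{2.3} we may take this identity to be a blended $\sigma^m$-linear polynomial $f$ in indeterminates $x_1,\dots,x_n$; for $n=1$ this contradicts Lemma~\ref{2.4}, so suppose $n\ge 2$ and that the statement holds for every smaller number of indeterminates.

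For the inductive step I would specialize the last indeterminate $x_n$ to a well-chosen element $a\in D$, producing a nonzero $\sigma^m$-GPI of $D$ in the $n-1$ indeterminates $x_1,\dots,x_{n-1}$ and contradicting the induction hypothesis. Since $f$ is $\sigma^m$-linear, each of its monomials contains exactly one of $x_n,x_n^\sigma,\dots,x_n^{\sigma^m}$, so we may group monomials and write
\[
f=\sum_{j=0}^{m}g_j,\qquad g_j=\sum_{t}u_{jt}\,x_n^{\sigma^j}\,v_{jt},
\]
with $u_{jt},v_{jt}\in D\langle Z_m\rangle$ and $Z=\{x_1,\dots,x_{n-1}\}$. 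Substituting $x_n\mapsto a$ (so $x_n^{\sigma^j}\mapsto a^{\sigma^j}$) turns $f$ into $f_a:=\sum_{j,t}u_{jt}\,a^{\sigma^j}\,v_{jt}\in D\langle Z_m\rangle$, which is again a $\sigma^m$-GPI of $D$ because $f$ is. Thus everything comes down to the claim that \emph{$f_a\neq 0$ for some $a\in D$.}

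To prove the claim I would follow the template of the proof of Lemma~\ref{2.4}. Let $k$ be the largest index with $g_k\neq 0$. If $k>0$, replace $f$ by the auxiliary polynomial built exactly as the polynomial $g$ in Lemma~\ref{2.4} — substituting for $x_n$, in each block, an expression of the shape $\sigma^{\,i}\!\bigl(x_n\,\sigma^{-m}(c)\,\sigma^{-m}(r)\bigr)$ and subtracting a suitable left multiple of $f$ — which is again a nonzero $\sigma^m$-GPI whose top block $g_k$ has strictly fewer monomials; iterating removes $x_n^{\sigma^k}$ and lowers $k$, reducing us to the case $k=0$. When $k=0$ we have $f=\sum_t u_{0t}\,x_n\,v_{0t}$; after discarding redundant terms the family $\{u_{0t}\}$ is $F$-linearly independent and $\sum_t u_{0t}\otimes v_{0t}\neq 0$ in $D\langle Z_m\rangle\otimes_F D\langle Z_m\rangle$, and then, via the analogue (used as in Lemma~\ref{2.4}) of the isomorphism $D\langle Z_m\rangle\otimes_F D\langle Z_m\rangle\cong D\langle Z_m\rangle\,x_n\,D\langle Z_m\rangle$ from \cite[Remark 6.1.1]{k}, one produces $a\in D$ with $f_a\neq 0$ — unless a forced $F$-linear dependence among the relevant coefficients makes $f_a=0$ for all $a$, in which case one extracts either an ordinary GPI of $D$ or a $\sigma^m$-GPI of $D$ in fewer than $n$ indeterminates, contradicting \cite{s1} or the induction hypothesis respectively.

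The step I expect to be the main obstacle is precisely this bookkeeping: verifying, both in the $k>0$ iteration and in the final $k=0$ case, that the substitutions do not annihilate the polynomial, which forces one to keep the different $\sigma$-power blocks from interfering. This is where the hypothesis that $\sigma,\sigma^2,\dots,\sigma^m$ are pairwise distinct (equivalently, that $\sigma$ has order greater than $m$) enters, providing enough separation among the maps $a\mapsto a^{\sigma^j}$, exactly as in the one-variable argument of Lemma~\ref{2.4}. Once the claim is secured, $f_a$ is a nonzero $\sigma^m$-GPI of $D$ in $n-1$ indeterminates, the induction hypothesis is contradicted, and the proposition follows.
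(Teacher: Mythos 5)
Your reduction of the statement to ``a $\sigma^m$-GPI forces $D$ centrally finite'' and the treatment of the ``Furthermore'' part via Lemma~\ref{l3.1} are fine, but the inductive step contains a genuine gap: the whole argument rests on the claim that for a nonzero blended $\sigma^m$-linear $f$ there is some $a\in D$ with $f_a\neq 0$ in $D\langle Z_m\rangle$ (or that its failure can be exploited), and this claim is never proved. You propose to ``follow the template of Lemma~\ref{2.4}'', but every tool that template uses is specific to coefficients lying in the division ring $D$: the tensor criterion of \cite[Remark 6.1.1]{k} and \cite[Corollary 6.1.3]{k} are statements about elements $a_i,b_i\in D$, the choice of $r\in D$ keeping finitely many elements $F$-independent uses Amitsur's theorem \cite{s1} applied to a GPI of $D$ with coefficients in $D$, and the $k>0$ reduction in Lemma~\ref{2.4} manipulates $\sigma^{-m}$ of coefficients, which only makes sense for coefficients in $D$. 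In your setting the coefficients $u_{jt},v_{jt}$ live in the free product $D\langle Z_m\rangle$, which is not a division ring and is not stable under $\sigma$, so none of these steps transfer without a new argument; your closing sentence (``one extracts either an ordinary GPI of $D$ or a $\sigma^m$-GPI in fewer indeterminates'') is exactly the assertion that needs proof, and you flag it yourself as the main obstacle. As written, the induction does not go through.

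For comparison, the paper sidesteps this difficulty by specializing the \emph{other} variables rather than the one being analyzed: fixing $r_2,\dots,r_n\in D$ gives a one-variable $\sigma^m$-polynomial $f_1$ with coefficients in $D$, where Lemma~\ref{2.4} genuinely applies. The dichotomy is then: if some choice of the $r_i$ makes $f_1\neq 0$, Lemma~\ref{2.4} yields $\dim_F D<\infty$ and Lemma~\ref{l3.1} finishes; if $f_1=0$ for all choices, the slots $x_1,x_1^\sigma,\dots,x_1^{\sigma^m}$ may be filled by arbitrary, mutually independent elements of $D$, i.e.\ the first variable is decoupled from $\sigma$. Iterating over $x_2,\dots,x_n$ either terminates early with central finiteness or ends with all slots decoupled, so that $f$ itself, read as an ordinary generalized polynomial in $n(m+1)$ indeterminates, is a nonzero GPI of $D$. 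If you want to salvage your induction-on-$n$ scheme, you would need to prove the nonvanishing-specialization claim over $D\langle Z_m\rangle$ separately; the paper's specialization pattern is the standard way to avoid having to do so.
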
	

\begin{proof} According to Corollary \ref{2.3}, if $D$ satisfies a $\sigma^m$-GPI, then we can assume that $D$ satisfies a $\sigma^m$-linear polynomial identity
	$$
	f(x_1, x_2, \dots, x_n, x_1^{\sigma}, x_2^{\sigma} ,\dots, x_n^{\sigma}, \dots,x_1^{\sigma^m}, x_2^{\sigma^m},\dots, x_n^{\sigma^m} )\in D\left\langle X_m \right\rangle .
	$$
	Then, for all $r_1, r_2, \dots, r_n\in D$, we have
	$$
	f(r_1, r_2, \dots, r_n, r_1^{\sigma}, r_2^{\sigma} ,\dots, r_n^{\sigma}, \dots,r_1^{\sigma^m}, r_2^{\sigma^m}, \dots, r_n^{\sigma^m} )=0.
	$$
	Let 
	$$
	f_1(x_1, x_1^{\sigma}, \dots, x_1^{\sigma^m} )=f(x_1, r_2, \dots, r_n, x_1^{\sigma}, r_2^{\sigma} ,\dots, r_n^{\sigma}, \dots,x_1^{\sigma^m}, r_2^{\sigma^m}, \dots, r_n^{\sigma^m} ).
	$$
	If $f_1(x_1, x_1^{\sigma}, \dots, x_1^{\sigma^m})\neq 0$, then by using Lemma \ref{2.4}, $\dim_FD<\infty$. Therefore, according to Lemma~\ref{l3.1}, the division ring $D$ satisfies a GPI. Suppose $f_1(x_1, x_1^{\sigma}, \dots, x_1^{\sigma^m})=0$. For all $r_1, r_2, \dots, r_n, r_{11}, r_{12}, \dots ,r_{1m}\in D$, we have
	$$
	f(r_1, r_2, \dots, r_n, r_{11}, r_2^{\sigma} ,\dots, r_n^{\sigma}, \dots,r_{1m}, r_2^{\sigma^m}, \dots, r_n^{\sigma^m} )=0.
	$$ 
	Next, let 
	$$
	f_2(x_2, x_2^{\sigma},\dots , x_2^{\sigma^m})	= f(r_1, x_2,r_3 \dots, r_n, r_{11}, x_2^{\sigma},r_3^{\sigma}  ,\dots, r_n^{\sigma}, \dots,r_{1m}, x_2^{\sigma^m}, r_3^{\sigma^m}, \dots, r_n^{\sigma^m}).
	$$
	If $f_2(x_2, x_2^{\sigma},\dots , x_2^{\sigma^m})\ne0$, then $D$ satisfies a GPI. Conversely, if
	$$
	f_2(x_2, x_2^{\sigma},\dots , x_2^{\sigma^m})=0
	$$
	then for all  $r_1, r_2, \dots, r_n, r_{11}, r_{12}, \dots ,r_{1m}, r_{21}, r_{22}, \dots , r_{2m}\in D$, we have 
	$$
	f(r_1, r_2, \dots, r_n, r_{11}, r_{21} ,\dots, r_n^{\sigma}, \dots,r_{1m}, r_{2m}, \dots, r_n^{\sigma^m} )=0.
	$$ 
	By repeating the same method for the variables $x_3, x_4, \dots, x_n$, we can assume that for all $r_1, r_2, \dots , r_{nm}\in D$, 
	$$
	f(r_1, r_2, \dots, r_{11}, r_{21}, \dots, r_{n1}, \dots, r_{1m}, r_{2m}, \dots , r_{nm} )=0.  
	$$
	Thus, $D$ satisfies a GPI.
\end{proof}

To show the main result of this paper, we borrow the following well known property.
\begin{lemma}\label{l3.3} Let $F$ be a field and $R$ be a ring whose center contains $F$. If 
\[
f(t)=a_nt^n+a_{n-1}t^{n-1}+\dots+a_1t+a_0
\]
is a nonzero polynomial over $R$, then $f(t)=0$ has at most $n$ solutions in $F$. 
\end{lemma}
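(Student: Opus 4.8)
The plan is to argue by induction on the degree $n$. We may assume $a_n\neq 0$: if $a_n=0$ we simply replace $n$ by the true degree of $f$, and if all coefficients vanish there is nothing to prove. The base case $n=0$ is immediate, since $f(t)=a_0\neq 0$ is a nonzero constant and has no roots.

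For the inductive step, suppose $f$ has a root $c\in F$. The point is that, because $t$ is a central indeterminate and $c$ lies in the center of $R$, the familiar division argument goes through unchanged: using $t^i-c^i=(t-c)(t^{i-1}+ct^{i-2}+\dots+c^{i-1})$ for each $i$, we get
\[
f(t)=f(t)-f(c)=\sum_{i=1}^{n}a_i\,(t^i-c^i)=(t-c)\,g(t),
\]
where $g(t)$ is a polynomial over $R$ of degree $n-1$ with leading coefficient $a_n\neq 0$, hence $g\neq 0$. Now let $c'\in F$ be any root of $f$ with $c'\neq c$. Evaluating at $t=c'$ gives $0=f(c')=(c'-c)\,g(c')$; since $F$ is a field and $c'-c\neq 0$, the scalar $c'-c$ is invertible in $F$ and central in $R$, so multiplying by $(c'-c)^{-1}$ yields $g(c')=0$. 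Thus every root of $f$ in $F$ other than $c$ is a root of $g$, and by the induction hypothesis $g$ has at most $n-1$ roots in $F$; therefore $f$ has at most $n$ roots in $F$.

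Alternatively, one can give a one-line Vandermonde argument: if $c_0,\dots,c_n\in F$ were $n+1$ distinct roots, then the column vector $a=(a_0,\dots,a_n)^{\top}$ with entries in $R$ satisfies $Va=0$, where $V=(c_i^{\,j})_{0\le i,j\le n}$ is the Vandermonde matrix over $F$; since $V$ is invertible over $F$ and all its entries are central in $R$, associativity of matrix multiplication gives $a=(V^{-1}V)a=V^{-1}(Va)=0$, contradicting $f\neq 0$.

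There is no genuine obstacle here; the statement is entirely routine. The only thing that deserves a moment's attention is that $R$ is noncommutative, so that a priori polynomial factorization and cancellation are unavailable — but in both arguments above the manipulations only involve the scalars in $F$, which are central and invertible whenever nonzero, so the classical commutative reasoning applies verbatim.
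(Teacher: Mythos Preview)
Your proof is correct. Both the induction/factor argument and the Vandermonde argument are valid here: the only potential issue in a noncommutative $R$ is whether the usual manipulations go through, and you correctly isolate the point---all scalars being used (the roots $c,c'\in F$ and the entries of $V$ and $V^{-1}$) lie in the center of $R$, so the identities $t^i-c^i=(t-c)\sum_{k=0}^{i-1}c^{k}t^{\,i-1-k}$ and $(V^{-1}V)a=V^{-1}(Va)$ hold without qualification.

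By way of comparison, the paper does not prove this lemma at all: it simply records it as a corollary of \cite[Proposition~2.3.27]{15} (Rowen). Your write-up therefore differs from the paper's in that it is fully self-contained; this costs a few lines but spares the reader an external reference for what is, as you say, an entirely routine fact. Either choice is reasonable for a result of this kind.
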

\begin{proof} This lemma is a corollary of \cite[Proposition 2.3.27]{15}.
\end{proof}
Now we are ready to prove the main theorem.

\begin{proof}[The proof of {\rm Theorem~\ref{main}}] Let $$f=f(x_1,\dots,x_n,x_1^\sigma,\dots,x_n^\sigma,\dots,x_1^{\sigma^m},\dots,x_n^{\sigma^m})$$ be a $\sigma^m$-GRI of $D$ and $f$ is defined at $$(r_1,\dots,r_n,r_1^\sigma,\dots,r_n^\sigma,\dots,r_1^{\sigma^m},\dots,r_n^{\sigma^m}).$$
	Assume that $t$ is a commuting indeterminate over $D$ and $t^\sigma=t$. Then, $$(r_j+x_jt)^{\sigma^i}=r_j^{\sigma^i}+x_j^{\sigma^i}t.$$ According to \cite[Lemma 7]{chiba}, $f$ is also defined at
	$$(r_1+x_1t,\dots,r_n+x_nt,r_1^\sigma+x_1^\sigma t,\dots,r_n^\sigma+x_n^\sigma t,\dots,r_1^{\sigma^m}+x_1^{\sigma^m}t,\dots,r_n^{\sigma^m}+x_n^{\sigma^m}t)$$ which is in $D(X_m)((t))^n$.
	Here $D(X_m)((t))^n$ is the ring of Laurent series over $D(X_m)$. Furthermore, we can write $f$ as
	\begin{align*}
& \ f(r_1+x_1t,\dots,r_n+x_nt,r_1^\sigma+x_1^\sigma t,\dots,r_n^\sigma+x_n^\sigma t,\dots,r_1^{\sigma^m}+x_1^{\sigma^m}t,\dots,r_n^{\sigma^m}+x_n^{\sigma^m}t)\\
	=& \ f_0+\sum_{i=1}^\infty f_i(x_1,\dots,x_n,x_1^\sigma,\dots,x_n^\sigma,\dots,x_1^{\sigma^m},\dots,x_n^{\sigma^m})t^i
	\end{align*}
	where $f_0=f(r_1,\dots,r_n,r_1^\sigma,\dots,r_n^\sigma,\dots,r_1^{\sigma^m},\dots,r_n^{\sigma^m})$ and $f_i$ $(i\geq 1)$ are homogeneous polynomials of degree $i$ in $D\langle X_m\rangle$. Moreover, there exists $i_0$ such that $f_{i_0}\neq 0$. We have
	\begin{align*}
& \ f(r_1+d_1t,\dots,r_n+d_nt,r_1^\sigma+d_1^\sigma t,\dots,r_n^\sigma+d_n^\sigma t,\dots,r_1^{\sigma^m}+d_1^{\sigma^m}t,\dots,r_n^{\sigma^m}+d_n^{\sigma^m}t)\\
	=& \ \sum_{i=1}^\infty f_i(d_1,\dots,d_n,d_1^\sigma,\dots,d_n^\sigma,\dots,d_1^{\sigma^m},\dots,d_n^{\sigma^m})t^i.
	\end{align*}
	Since $f(r_1+d_1t,\dots,r_n+d_nt,r_1^\sigma+d_1^\sigma t,\dots,r_n^\sigma+d_n^\sigma t,\dots,r_1^{\sigma^m}+d_1^{\sigma^m}t,\dots,r_n^{\sigma^m}+d_n^{\sigma^m}t)$ is an element in the quotient ring $D(t)$ of the polynomial ring $D[t]$, it can be represented as
	$$f(r_1+d_1t,\dots,r_n+d_nt,r_1^\sigma+d_1^\sigma t,\dots,r_n^\sigma+d_n^\sigma t,\\ \dots,r_1^{\sigma^m}+d_1^{\sigma^m}t,\dots,r_n^{\sigma^m}+d_n^{\sigma^m}t)=\frac{g(t)}{h(t)}$$
	where $g(t),h(t)\in D[t]$ and $h(t)\neq 0$. Since the field $F$ is infinite, there exist infinitely many $p\in F$ such that $f$ is defined at $(r_1+d_1p,\dots,r_n+d_np)$. Furthermore, since $f$ is a $\sigma^m$-GRI of $D$, we have
	\begin{center}
$f(r_1+d_1p,\dots,r_n+d_np,r_1^\sigma+d_1^\sigma p,\dots,r_n^\sigma+d_n^\sigma p,\dots,r_1^{\sigma^m}+d_1^{\sigma^m}p,\dots,r_n^{\sigma^m}+d_n^{\sigma^m}p)
	=  \frac{g(p)}{h(p)}=0,$
	\end{center}
	implying $g(p)=0$. Thus, the polynomial $g(t)$ has infinitely many roots in $F$, so $g(t)\equiv 0$ by Lemma~\ref{l3.3}. In this case, $$f(r_1+d_1t,\dots,r_n+d_nt,r_1^\sigma+d_1^\sigma t,\dots,r_n^\sigma+d_n^\sigma t,\dots,r_1^{\sigma^m}+d_1^{\sigma^m}t,\dots,r_n^{\sigma^m}+d_n^{\sigma^m}t)=0,$$ hence
	\begin{align*}
	&\sum_{i=1}^\infty f_i(d_1,\dots,d_n,d_1^\sigma,\dots,d_n^\sigma,\dots,d_1^{\sigma^m},\dots,d_n^{\sigma^m})t^i=0.
	\end{align*}
	Therefore, 
    \[
    f_i(d_1, d_2, \dots,d_n,d_1^{\sigma}, d_2^{\sigma},  \dots,d_n^{\sigma},\dots, d_1^{\sigma^m}, d_2^{\sigma^m}, \dots, d_n^{\sigma^m})=0
    \]
    for all $i$. Consequently, for all $d_1, d_2, \dots, d_n\in D$, we have $$f_{i_0}(d_1, d_2, \dots,d_n,d_1^{\sigma}, d_2^{\sigma},  \dots,d_n^{\sigma},\dots, d_1^{\sigma^m}, d_2^{\sigma^m}, \dots, d_n^{\sigma^m})=0.$$ Thus, $$f_{i_0}(x_1, x_2, \dots, x_n, x_1^{\sigma}, x_2^{\sigma} ,\dots, x_n^{\sigma}, \dots,x_1^{\sigma^m}, x_2^{\sigma^m}, \dots, x_n^{\sigma^m} )$$ is a $\sigma^m$-$GPI$ of $D$. According to Proposition \ref{MdGPI}, the ring $D$ satisfies a GPI. Therefore, $D$ is centrally finite by Lemma~\ref{l3.1}.
\end{proof}



	\bibliographystyle{amsplain}


\EditInfo{March 27, 2025}{October 4, 2025}{Roozbeh Hazrat}

\end{document}